\newtheorem{theorem}{Theorem}[section]
\newtheorem{lemma}[theorem]{Lemma}
\newtheorem{proposition}[theorem]{Proposition}
\newtheorem{remark}[theorem]{Remark}
\numberwithin{equation}{section}
\author{Fernanda Botelho}
\address{
Department of Mathematical Sciences\\ 
The University of Memphis\\
 Memphis, TN 38152, USA}
\email{mbotelho@memphis.edu}
\keywords{Zygmund spaces, the little Zygmund space; Hermitian operators; surjective linear isometries; generators of one-parameter groups of surjective isometries. }
\subjclass{Primary: 46E15, Secondary: 47B15, 47B38}
\begin{document}

\title[ Operators on  Zygmund spaces]{ Surjective Isometries and Hermitian Operators on  Zygmund spaces}

\begin{abstract}
In this paper we show that  surjective linear isometries on the little Zygmund space are integral operators and the bounded hermitian operators are trivial.
\end{abstract}
\maketitle

\newpage
\section{Introduction}
The Zygmund space $\mathcal{Z}$ is the set of  all analytic functions $f$ on the open disc $\triangle$,  which are continuously extended to the boundary and satisfy the boundedness condition
\[ \sup_{|z|<1} \, (1-|z|^2) |f''(z)| < \infty.\]
This space endowed with the norm  $\|f\|_{\mathcal{Z}} = |f(0)|+|f'(0)|+\sup_{|z|<1} \, (1-|z|^2) |f''(z)|$ is a Banach space. We recall that the little Zygmund space  is the  closed subspace of $\mathcal{Z}$ defined by (see \cite{zy}):
\[ \mathcal{Z}_0 =\{ f \in \mathcal{Z}: \,  \lim_{|z|\rightarrow 1^-} \, (1-|z|^2) |f''(z)|=0\}.\]
Furthermore, we also consider the subspaces of the little Zygmund space
\[ \mathcal{Z}_0^{(0,1)} =\{ f \in \mathcal{Z}_0: \, f(0)=f'(0)=0 \} \] and
 \[ \mathcal{Z}_0^{i} =\{ f \in \mathcal{Z}_0: \, f^{(i)}(0)=0 \} \,\,\,\mbox{with} \,\,i=0, 1, \, \,\,\, f^{(0)}=f \,\, \mbox{and} \,\, f^{(1)}=f'.\]

Recently, there have been numerous papers on various aspects of classes of operators on Zygmund spaces, see \cite{co_li} and references therein.
In this paper we characterize the surjective isometries supported by these spaces and also classes of operators  that are intrinsically related to the surjective isometries.

In section 2,  we describe the surjective linear isometries supported by $\mathcal{Z}_0^{(0,1)}.$ We start by defining an  embedding of $\mathcal{Z}_0^{(0,1)}$ into a space of continuous functions $\mathcal{C}_0(\triangle)$, then using   the form of the extreme points of the unit ball  of the dual space $\mathcal{C}_0(\triangle)^*$ we give a characterization for the extreme points of $(\mathcal{Z}_0^{(0,1)})^*_1,$ see \cite{bo_fl_ja}.

The adjoint operator of a surjective linear isometry on a Banach space $X$ determines a natural bijection on the set of extreme points of $X^*_1.$ Hence  the action of the adjoint operator  on the set of extreme points  often gives  a representation for the isometries on $X$. This was the approach followed by deLeeuw, Rudin and Werner in the characterization of the surjective isometries on spaces of continuous functions, cf. Theorem 2.3.16 in  \cite{fj}.  We follow this path in our derivation of the form for the surjective isometries supported by  $\mathcal{Z}_0^{(0,1)}$. We show that isometries of $\mathcal{Z}_0^{(0,1)}$ are integral operators of translated weighted differential operators. The form of the isometries encountered in this new setting is quite different from the standard weighted composition operators type of isometries supported by several spaces of analytic functions, as pointed out in \cite{ye-li}, see also \cite{fj,hj} and \cite{hof}.

In section 3, we  use our characterization of the isometries to describe the generators of strongly continuous one-parameter groups of surjective isometries. Thereby  we derive the form  for the hermitian operators from the representation theorem for isometries. Further, we conclude that bounded hermitian operators are trivial. We then employ a theorem in \cite{flja} to extend our representation for the hermitian operators on $\mathcal{Z}_0^{(0,1)}$ to the little Zygmund space $\mathcal{Z}_0.$

\section{Extreme points of $(\mathcal{Z}_0^{(0,1)})^*_1$}

We   embed   $\mathcal{Z}_0^{(0,1)}$ into  $\mathcal{C}_0(\triangle),$ the space of  all continuous functions $F$ defined on the unit disc and satisfying the boundary condition $\lim_{|z| \rightarrow 1}F(z)=0$. This space is  endowed with the norm $\|F\|_{\infty} = \max |F(z)|.$  We define
\begin{equation*} \begin{array}{lccl}  \Phi:&\mathcal{Z}_0^{(0,1)} & \rightarrow & \mathcal{C}_0(\triangle) \\
                      & f  &\rightarrow &  F=\Phi (f):  \triangle \rightarrow  E,\end{array}\end{equation*}  by  $\Phi (f)(z)=(1-|z|^2) f''(z).$
 The map $\Phi$  is a linear isometry with  range space denoted  by $\mathcal{Y}$.

 Throughout this section we represent functions in $\mathcal{Z}_0^{(0,1)}$ with lower case letters and  their images under $\Phi$  with upper case letters, e.g.  $F=\Phi (f)$.

Arens and Kelley's theorem (see  Corollary 2.3.6 in \cite{fj}) states that  every  extreme point of the unit ball of $\mathcal{Y}^*$ is  of the form $e^{i\alpha} \delta_{z},$ with  $z\in \triangle$ and $\delta_z :\mathcal{Y}\, \rightarrow \, \mathbb{C}$  given by  $\delta_z(F)=F(z).$ This implies that  extreme points of  $(\mathcal{Z}_0^{(0,1)})^*_1$ are of the form $\varphi: \mathcal{Z}_0^{(0,1)} \rightarrow \mathbb{C}$ given by $\varphi (f)(z)= e^{i \alpha} (1-|z|^2) f''(z)$.

 We denote the set of extreme points of the unit ball of the dual space $\mathcal{Y}$ by $ext(\mathcal{Y}_1^*)$ and in the next lemma  we  show that every functional of the form $e^{i\alpha} \delta_{z}$ is an extreme point of $\mathcal{Y}^*_1$.

\begin{lemma}\label{Lemma_extreme_points}
 $ext(\mathcal{Y}^*_1)=\{ e^{i\theta}\delta_z : \, z \in \triangle\,\,\theta \in \mathbb{R}\}.$
\end{lemma}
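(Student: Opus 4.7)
The plan is to prove the nontrivial inclusion: every restriction $e^{i\theta}\delta_z|_{\mathcal{Y}}$ is an extreme point of $\mathcal{Y}^*_1$. The reverse containment is essentially set up in the paragraph preceding the lemma (Arens--Kelley for $\mathcal{C}_0(\triangle)$, combined with the standard face argument that every extreme point of $\mathcal{Y}^*_1$ admits a norm-preserving extension to some extreme point of $\mathcal{C}_0(\triangle)^*_1$: the set of norm-preserving extensions is a $w^*$-compact convex face, so contains an extreme point of the larger unit ball).

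The crucial step is the construction of a peaking function in $\mathcal{Y}$ at each prescribed point. Given $z_0 \in \triangle$ and $\theta \in \mathbb{R}$, I would take
\[ f''(z) = \frac{(1-|z_0|^2)\, e^{-i\theta}}{(1-\overline{z_0}z)^2}, \]
integrated twice from $0$, so that $f \in \mathcal{Z}_0^{(0,1)}$. Writing $\phi_{z_0}(z) = (z_0-z)/(1-\overline{z_0}z)$ for the Möbius involution and using the Schwarz--Pick identity
\[ 1 - |\phi_{z_0}(z)|^2 = \frac{(1-|z_0|^2)(1-|z|^2)}{|1-\overline{z_0}z|^2}, \]
the function $F = \Phi(f)$ satisfies $|F(z)| = 1 - |\phi_{z_0}(z)|^2 \leq 1$ with equality only at $z = z_0$, $F(z_0) = e^{-i\theta}$, and $|F(z)| \to 0$ as $|z| \to 1$. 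Hence $F \in \mathcal{Y}$ with $\|F\|_\infty = 1$.

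To close the argument, suppose $e^{i\theta}\delta_{z_0}|_{\mathcal{Y}} = \tfrac{1}{2}(\psi_1+\psi_2)$ with $\psi_1,\psi_2 \in \mathcal{Y}^*_1$. Evaluating both sides at $F$ yields $\psi_1(F) = \psi_2(F) = 1$, since the value of the left side is $1$ and $|\psi_i(F)| \leq 1$. Extend each $\psi_i$ by Hahn--Banach to $\tilde\psi_i \in \mathcal{C}_0(\triangle)^*_1$, represented by a regular complex Borel measure $\mu_i$ on $\triangle$ with $\|\mu_i\| \leq 1$. The equality $\int F\,d\mu_i = 1 = \|F\|_\infty \cdot \|\mu_i\|$ together with $|F(z)| < 1$ off $z_0$ forces $|\mu_i|$ to concentrate at $\{z_0\}$ (write $d\mu_i = h_i\,d|\mu_i|$ with $|h_i|=1$, so that $F h_i \equiv 1$ holds $|\mu_i|$-almost everywhere). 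A direct evaluation then pins down $\mu_i = e^{i\theta}\delta_{z_0}$, whence $\psi_1 = \psi_2 = e^{i\theta}\delta_{z_0}|_{\mathcal{Y}}$, establishing extremality.

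The main obstacle is guessing the peaking candidate: once the kernel $(1-\overline{z_0}z)^{-2}$ is isolated, both the sup-norm bound and the boundary vanishing follow directly from the Schwarz--Pick identity, and the rest of the argument is the standard ``peak implies extreme'' reasoning in $\mathcal{C}_0$.
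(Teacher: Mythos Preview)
Your proof is correct and takes essentially the same approach as the paper's: the peaking function is identical (the paper's $f_0$ satisfies $f_0''(z)=(1-|z_0|^2)/(1-\overline{z_0}z)^2$, which is your kernel with $\theta=0$), and both arguments extend the convex-combination functionals via Hahn--Banach and show the representing measures must concentrate at $z_0$. Your Schwarz--Pick presentation of the strict peak is a little cleaner, and your formulation also covers $z_0=0$, which the paper's explicit antiderivative excludes.
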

\begin{proof}
Arens and Kelley's theorem  states   that $ext(\mathcal{Y}^*_1)\,\subseteq \, \{ e^{i \theta} \delta_z : \, z \in \triangle\}.$
We now prove the reverse inclusion. Given a functional of the form $e^{i\alpha}\delta_z$, we assume that there exist $\varphi_1$ and $\varphi_2$ in $\mathcal{Y}^*_1$,  such that
\begin{equation} \label{extreme_points}  \delta_{z}= \frac{\varphi_1+ \varphi_2}{2}.\end{equation}
Since $\mathcal{Y}$ is a closed subspace of $\mathcal{C}_0(\triangle)$, the Hahn-Banach Theorem implies the existence of norm 1 extensions of $\varphi_0$ and $\varphi_1$, to  $\mathcal{C}_0(\triangle),$ denoted by $\tilde{\varphi_0}$ and $\tilde{\varphi_1}$ respectively. These functionals are written as
\[ \tilde{\varphi_1}(F) = \int_{\triangle} \, F d \nu \,\,\,\, \mbox{and} \,\,\, \tilde{\varphi_2}(F) = \int_{\triangle} \, F d \mu ,\]
with  $\nu$ and $\mu$ representing  regular probability Borel measures on $\triangle$.

Given $z_0 \in \triangle \setminus \{0\}$, we consider the following function
\[ f_0(z)= (1-|z_0|^2) \left( - \frac{1}{\overline{z_0}} \right)\left[z+\frac{1}{\overline{z_0}} \mbox{Log} (1-\overline{z_0} \, z)\right].\] It is easy to check that $f_0 \in \mathcal{Z}_0^{(0,1)}$. Furthermore $\|f_0\|_{\mathcal{Z}} = |F_0(z_0)| > |F_0(z)|, $ where $F_0(z)= (1-|z|^2) f_0''(z)$ for all $z \neq z_0.$
We apply (\ref{extreme_points}) to the function $F_0$  to conclude that $\tilde{\varphi_0}(F_0)=\tilde{\varphi_1}(F_0)=1.$ If $|\nu|(\triangle \setminus \{z_0\})>0,$ then there exists a compact subset $K$ of $\triangle \setminus \{z_0\}$ such that $|\nu |(K)>0$. Clearly
\[ \sup_{z \in K} |F_0(z) |= \sup_{z \in K} (1-|z|^2)|f_0''(z)|=\alpha <1.\]
Hence
\begin{align*}
1=\varphi_1(F_0)&=|\int_{\triangle} F_0 d \nu| =\left |\int_{\{z_0\}} F_0  d \nu+ \int_K F_0  d \nu+\int_{(\triangle\setminus \{z_0\})\setminus K} F_0  d \nu\right|\\
& \leq |\nu|(\{z_0\}) +\alpha |\nu|(K) +|\nu|((\triangle\setminus \{z_0\})\setminus K) \\
& <|\nu|(\triangle) =1.
\end{align*}
This leads to an absurd and shows that $|\nu| (\triangle \setminus \{z_0\})=0$ and $\nu (\triangle \setminus \{z_0\})=0$. Therefore $\nu (\{z_0\})=1.$  A similar reasoning applies to $\mu $.
Given $F \in \mathcal{Y}$,  we have
\begin{align*}
 \delta_{z_0} (F) = &(1-|z_0|^2) f''(z_0)=\frac{\varphi_0(F)+\varphi_1(F)}{2} \\
 =& \frac{1}{2} \left( \int_{\{z_0\}} F d\nu + \int_{\{z_0\}} F d\mu \right)\\
 =& \frac{1}{2} \left[ \nu (z_0) (1-|z_0|^2) f''(z_0)+ \mu (z_0) (1-|z_0|^2) f''(z_0)\right].
\end{align*}  Therefore
\[  f''(z_0)= \frac{\nu (z_0) (f''(z_0))+\mu(z_0) (f''(z_0))}{2}.\] Then  $\nu =\mu$ and $\varphi_0=\varphi_1.$ This  completes the proof.
\end{proof}
Lemma \ref{Lemma_extreme_points} implies that the extreme points of $(\mathcal{Z}_0^0)^*_1$ are precisely the functionals  $ \Upsilon_{z_0}(f)= e^{i\alpha} (1-|z_0|^2)f''(z_0),$ with $\alpha \in \mathbb{R}$ and $z_0 \in \triangle$.
\begin{remark} \label{zygmund_subs}
 We   observe that $(\mathcal{Z}_0^i)^*= (\mathbb{C}\oplus_1 \mathcal{Z}_0^{(0,1)})^*= \mathbb{C}\oplus_{\infty} ({\mathcal{Z}_0^{(0,1)}})^*$  ($i=0,1$) and also \[(\mathcal{Z}_0)^*= (\mathbb{C}\oplus_1\mathbb{C}\oplus_1 \mathcal{Z}_0^{(0,1)})^*= \mathbb{C}\oplus_{\infty} \mathbb{C}\oplus_{\infty} ({\mathcal{Z}_0^{(0,1)}})^*. \]
It follows  that $ext((\mathcal{Z}_0^{i})^*_1)$ consists of functionals $\tau$ given by
\[ \tau (f) = e^{i \theta_1}f^{(1-i)}(0)z_0^{(1-i)}+e^{i \theta_2}(1-|z_0|^2)f''(z_0),\, \mbox{with} \, z_0 \in \triangle, \, \theta_{1,2} \in [0,\,2\pi).\]
Moreover $ext((\mathcal{Z}_0)^*)_1$ is the set of all functionals  \[\tau (f)= e^{i \theta_0} f(0)+ e^{i \theta_1}f'(0) z_0 + e^{i \theta_2}(1-|z_0|^2) f''(z_0),\] with $\theta_k \in [0,\,2\pi)$ ($k=0,1,2$).
\end{remark}
\section{Characterization of the surjective isometries on $\mathcal{Z}_0^{(0,1)}$}
In this section we show that  linear surjective isometries on $\mathcal{Z}_0^{(0,1)}$ can be represented as integral operators.

Given a surjective linear isometry $T: \mathcal{Z}_0^{(0,1)} \rightarrow \mathcal{Z}_0^{(0,1)}$ we denote by  $S: \mathcal{Y} \rightarrow \mathcal{Y}$ the corresponding isometry on $\mathcal{Y}$ such that   $S = \Phi \circ T \circ \Phi^{-1},$ where $\Phi$ represents the embedding considered in the previous section.  The adjoint operator of $S$, $S^* : \mathcal{Y}^* \rightarrow \mathcal{Y}^*$ induces a permutation on the set of extreme points  of  $(\mathcal{Y}^*)_1$. This can be  expressed as follows.
For every  $z \in \triangle$ and $\theta$ there exists a unique pair $(w, \alpha)\in \triangle\times [0, \, 2\pi)$   such that
\[ S^* ( e^{i\theta} \delta_{z} ) =  e^{i \alpha} \delta_{w}.\]
Equivalently
\begin{equation} \label{meq}
(1-|z|^2) e^{i \theta} (Tf)''(z)= (1-|w|^2) e^{i \alpha} (f''(w)), \,\,\, \mbox{for every } f \in \mathcal{Z}_0^{(0,1)}.
\end{equation}

The values of $\alpha$ and $w$ conceivably  depend on the choice of $\theta$ and $z$.  This determines the following  two maps:
\begin{equation} \label{sigma_Gamma} \begin{array}{rlll}\sigma_0 :& \mathbb{S}_1 \times \triangle & \rightarrow &\triangle  \\ & (e^{i\theta}, z)& \rightarrow & w \end{array} \,\,\,\, \mbox{and} \,\,\, \begin{array}{rlll}\Gamma_0 :& \mathbb{S}_1 \times \triangle  & \rightarrow & \mathbb{S}_1 \\ & (e^{i\theta} ,z)& \rightarrow &  e^{i\alpha}. \end{array}\end{equation}
 We show in the following lemma that $\sigma_0$ is independent of the first coordinate and then we consider $\sigma: \triangle \rightarrow \triangle $ given by $\sigma (z)= \sigma_0 (1, z).$

\begin{lemma} \label{sigma} If $z \in \triangle$, then  $\sigma_0$ restricted to the set  $\{ (e^{i\theta}, z): \theta \in \mathbb{R}\}$ is constant and $\sigma:\triangle \rightarrow \triangle,$ defined by $\sigma(z) = \sigma_0 (1,z),$ is a disc automorphism.
\end{lemma}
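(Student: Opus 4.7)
The plan has three phases: (i) reduce independence of $\sigma_0$ in the first coordinate to the $\mathbb{C}$-linearity of $S^*$; (ii) extract analyticity of $\sigma$ using two specific test functions in $\mathcal{Z}_0^{(0,1)}$; and (iii) invoke the same construction for $T^{-1}$ to obtain bijectivity.

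For (i), I would write $S^*(\delta_z)=e^{i\alpha_0}\delta_{w_0}$. By $\mathbb{C}$-linearity, $S^*(e^{i\theta}\delta_z)=e^{i(\theta+\alpha_0)}\delta_{w_0}$, and since the representation of extreme points of $\mathcal{Y}^*_1$ is unique up to a unimodular scalar (Lemma \ref{Lemma_extreme_points}), this forces $w=w_0$ and $e^{i\alpha}=e^{i\theta}e^{i\alpha_0}$ in (\ref{meq}). Hence $\sigma_0(e^{i\theta},z)$ is independent of $\theta$, so $\sigma(z):=\sigma_0(1,z)$ is well-defined and, setting $e^{i\alpha(z)}:=\Gamma_0(1,z)$, relation (\ref{meq}) at $\theta=0$ becomes
\[ (1-|z|^2)(Tf)''(z) = (1-|\sigma(z)|^2)e^{i\alpha(z)}f''(\sigma(z)), \qquad f\in\mathcal{Z}_0^{(0,1)}. \]

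For (ii), I would plug in $f_0(z)=z^2/2$ and $f_1(z)=z^3/6$, both in $\mathcal{Z}_0^{(0,1)}$, with $f_0''\equiv 1$ and $f_1''(z)=z$. Writing $h(z):=(1-|\sigma(z)|^2)e^{i\alpha(z)}/(1-|z|^2)$, the displayed identity yields $(Tf_0)''(z)=h(z)$ and $(Tf_1)''(z)=h(z)\sigma(z)$, hence
\[ \sigma(z) = \frac{(Tf_1)''(z)}{(Tf_0)''(z)}. \]
The numerator and denominator are holomorphic on $\triangle$ (as second derivatives of analytic functions), and the denominator never vanishes because $|h(z)|=(1-|\sigma(z)|^2)/(1-|z|^2)>0$ whenever $\sigma(z)\in\triangle$. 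Thus $\sigma$ is holomorphic on $\triangle$ with $\sigma(\triangle)\subseteq\triangle$.

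For (iii), the same construction applied to the surjective isometry $T^{-1}$ produces a holomorphic self-map $\tau:\triangle\to\triangle$ and a unimodular function $e^{i\beta(z)}$ such that $(1-|z|^2)f''(z)=(1-|\tau(z)|^2)e^{i\beta(z)}(Tf)''(\tau(z))$. Inserting the functional identity from (i) evaluated at $\tau(z)$ gives, for all $f\in\mathcal{Z}_0^{(0,1)}$,
\[ (1-|z|^2)f''(z) = C(z)\,f''(\sigma(\tau(z))), \]
with $C(z)=e^{i\beta(z)}(1-|\sigma(\tau(z))|^2)e^{i\alpha(\tau(z))}\neq 0$. Testing with $f_0$ gives $C(z)=1-|z|^2$, and testing with $f_1$ then forces $\sigma(\tau(z))=z$; swapping the roles of $T$ and $T^{-1}$ yields $\tau(\sigma(z))=z$. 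Consequently $\sigma$ is a holomorphic bijection of $\triangle$ onto itself, i.e., a disc automorphism.

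The most delicate step is the non-vanishing of $(Tf_0)''$ used in (ii); it is what makes the quotient representation of $\sigma$ globally valid on $\triangle$, and it is precisely the Zygmund weight $(1-|z|^2)$ together with the inclusion $\sigma(\triangle)\subseteq\triangle$ that supplies this positivity.
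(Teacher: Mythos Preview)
Your argument is correct and follows essentially the same route as the paper: the same test functions $f_0(z)=z^2/2$ and $f_1(z)=z^3/6$ yield the quotient formula $\sigma=(Tf_1)''/(Tf_0)''$ for analyticity, and the parallel construction for $T^{-1}$ gives bijectivity. The only difference is cosmetic: in step~(i) you invoke the $\mathbb{C}$-linearity of $S^*$ to get $\theta$-independence in one line, whereas the paper reaches the same conclusion by substituting $f_0$ into the functional equation at two values of $\theta$; both arguments ultimately rest on the injectivity of $w\mapsto\delta_w$ on $\mathcal{Y}$, which is what the test functions $f_0,f_1$ provide.
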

\begin{proof}
We assume that there are points in  $\mathbb{S}_1$, $e^{i\theta}$ and $e^{i\theta_1}$ such that $\sigma_0 (e^{i\theta}, z)=w \neq w_1 =\sigma_0 (e^{i\theta_1}, z)$, for some value of $z \in \triangle$. Hence
\begin{equation} \label{2_eq_a}
 (1-|z|^2) e^{i \theta} ((Tf)''(z)) = (1-|w|^2) e^{i \alpha}(f''(w))\end{equation}
and
 \begin{equation} \label{2_eq_b}(1-|z|^2) e^{i \theta_1} ((Tf)''(z)) = (1-|w_1|^2) e^{i \alpha_1}(f''(w_1)). \end{equation}
Substituting   $f_0(z)= z^2/2$ into (\ref{2_eq_a}) and (\ref{2_eq_b}) we get
\[ (1-|z|^2) e^{i \theta}((Tf_0)''(z))=(1-|w|^2) e^{i \alpha} \,\,\,\mbox{and} \,\,\, (1-|z|^2) e^{i \theta_1} ((Tf_0)''(z))=(1-|w_1|^2) e^{i \alpha_1},\]  respectively.

Therefore $ e^{i (\alpha-\theta)} (1-| w|^2)= e^{i (\alpha_1-\theta_1)} (1-| w_1|^2)$. This implies that  $|w|=|w_1|$ and  $e^{i (\alpha-\theta)}=e^{i(\alpha_1- \theta_1) }.$ From (\ref{2_eq_a}) and (\ref{2_eq_b}) we conclude that  $f''(w)=f''(w_1),$ for every $f \in \mathcal{Z}^{(0,1)}_0.$ Hence $w=w_1$ and $\sigma_0$ depends only on the value of $z$, as claimed.

Then given  $\sigma$ as in the statement of the lemma, we write  (\ref{2_eq_a}) as
 \begin{equation} \label{n_eq_a}
 (1-|z|^2) e^{i \theta}  (Tf)''(z)  = (1-|\sigma (z)|^2) e^{i \alpha} f''(\sigma (z) ).\end{equation}

 We apply the same reasoning to  $T^{-1}$ to determine $\psi,$ a mapping from the open disc into itself, satisfying the equation
 \begin{equation} \label{n_eq_for_inv}
 (1-|z|^2) e^{i \theta}  (T^{-1}f)''(z)  = (1-|\psi (z)|^2) e^{i \beta} f''(\psi (z) ).\end{equation}
Equation (\ref{n_eq_for_inv}) applied to $Tf$ yields
\[ \begin{array}{rl} (1-|z|^2) e^{i \theta} f''(z)& = (1-|\psi (z)|^2) e^{i \beta}(Tf)''(\psi (z) )\\  & \\ & =e^{i (\beta - \theta + \alpha)} (1-|\sigma(\psi(z))|^2) f''(\sigma(\psi(z))), \end {array} \]
then
\[  (1-|z|^2) e^{i \theta} f''(z)=e^{i (\beta - \theta + \alpha)} (1-|\sigma(\psi(z))|^2) f''(\sigma(\psi(z))), \,\,\, \mbox{for every } \,\,f \in  \mathcal{Z}_0^{(0,1)} \,\, z \in \triangle .\]
Setting  $f(z)=z^2/2$ in the equation displayed above, we obtain $$(1-|z|^2) e^{i \theta}=e^{i (\beta - \theta + \alpha)} (1-|\sigma(\psi(z))|^2). $$ This implies  $|z|=|\sigma(\psi(z))|$ and  $e^{i \theta}=e^{i (\beta - \theta + \alpha)}.$ Therefore $f''(z)=f''(\sigma(\psi(z)))$ which implies

that  $\sigma \circ \psi $ is the identity on $\triangle$ and then $\sigma$ is surjective. A similar reasoning also shows that $\psi \circ \sigma $ is the identity on $\triangle$ and $\sigma$ is injective.
We now  prove that $\sigma$ is analytic. To this end,  we apply the equation (\ref{n_eq_a}) to the two following functions $f_0(z)= \frac{z^2}{2} $ and $f_1(z)= z^3/6.$ We obtain
\[ (1-|z|^2) e^{i \theta} [(Tf_0)''(z)]= (1-|\sigma(z)|^2) e^{i \alpha} \]
and
\[ (1-|z|^2)e^{i \theta} [(Tf_1)''(z)]= (1-|\sigma(z)|^2)e^{i \alpha} \sigma(z) ,\] respectively.
For every $z \in \triangle $ we have $[(Tf_0)''(z)]\neq 0.$ Therefore
\[ \sigma(z) = \frac{[(Tf_1)''(z)]}{[(Tf_0)''(z)]}.\] This shows that $\sigma $ is analytic and then a disc automorphism.
\end{proof}

\begin{theorem} \label{mt} Let  $T: \mathcal{Z}_0^{(0,1)} \rightarrow \mathcal{Z}_0^{(0,1)}$, then $T$ is surjective linear isometry if and only if there exist a  disc automorphism $\sigma $ and a real number $\alpha$ such that for every $f \in  \mathcal{Z}_0^{(0,1)} $ and $z \in \triangle,$
\[ Tf(z)= e^{i\alpha} \,\int_0^z \, [f'\circ\sigma)(z)-(f'\circ\sigma)(0)]dz.\]
\end{theorem}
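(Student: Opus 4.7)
The plan is to convert the pointwise functional equation established in Lemma \ref{sigma} into an explicit representation for $T$. The derivation of that lemma already shows that $e^{i(\alpha-\theta)}$ depends only on $z$; setting $\theta=0$ in (\ref{n_eq_a}) and writing $h(z):=\Gamma_0(1,z)$ yields a unimodular function on $\triangle$ satisfying
\[ (Tf)''(z)\,=\,\frac{1-|\sigma(z)|^2}{1-|z|^2}\,h(z)\,f''(\sigma(z)),\qquad f\in\mathcal{Z}_0^{(0,1)}.\]

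Next I would identify $h$. Applying the displayed equation to the test function $f_0(z)=z^2/2$, for which $f_0''\equiv 1$, produces the analytic function
\[ A(z):=(Tf_0)''(z)=\frac{1-|\sigma(z)|^2}{1-|z|^2}\,h(z).\]
The standard identity $(1-|z|^2)|\sigma'(z)|=1-|\sigma(z)|^2$, valid for every disc automorphism, then gives $|A(z)|=|\sigma'(z)|$. Because $\sigma$ is an automorphism, $\sigma'$ is analytic and nonvanishing on $\triangle$, so the quotient $A/\sigma'$ is analytic with constant modulus $1$; by the maximum modulus principle it is a unimodular constant $e^{i\alpha}$. Substituting $A(z)=e^{i\alpha}\sigma'(z)$ back into the pointwise equation yields the clean relation
\[ (Tf)''(z)\,=\,e^{i\alpha}\,\sigma'(z)\,f''(\sigma(z)).\]

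To recover $T$ from this, I integrate twice. The chain rule gives $\sigma'(z)f''(\sigma(z))=\frac{d}{dz}(f'\circ\sigma)(z)$, and the boundary conditions $Tf(0)=(Tf)'(0)=0$ built into $\mathcal{Z}_0^{(0,1)}$ successively produce $(Tf)'(z)=e^{i\alpha}\bigl[(f'\circ\sigma)(z)-(f'\circ\sigma)(0)\bigr]$ and then the formula stated in the theorem. For the converse, given $\sigma$ and $\alpha$, differentiating the integral formula twice returns the pointwise relation above; the same identity $(1-|z|^2)|\sigma'(z)|=1-|\sigma(z)|^2$, combined with the fact that a disc automorphism extends to a homeomorphism of the closed disc (so $|\sigma(z)|\to 1$ as $|z|\to 1$), shows that $T$ sends $\mathcal{Z}_0^{(0,1)}$ into itself isometrically, and surjectivity is immediate because the recipe applied to $\sigma^{-1}$ and $-\alpha$ produces a two-sided inverse.

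I expect the main obstacle to be the uniqueness step that pins down $h$: one must notice that analyticity of $(Tf_0)''$ forces $|\sigma'|\,h$ to be analytic, and then exploit non-vanishing of $\sigma'$ together with the rigidity of analytic functions of constant modulus to isolate the single free phase $e^{i\alpha}$. Once this step is executed, the rest reduces to bookkeeping with the chain rule and the disc-automorphism identity.
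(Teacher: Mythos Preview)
Your proposal is correct and follows essentially the same route as the paper: apply the functional equation from Lemma~\ref{sigma} to $f_0(z)=z^2/2$, use the automorphism identity $(1-|z|^2)|\sigma'(z)|=1-|\sigma(z)|^2$ to see that $(Tf_0)''/\sigma'$ has modulus one, invoke the maximum modulus principle to obtain the constant phase $e^{i\alpha}$, then integrate twice with the boundary conditions. Your treatment of the converse is in fact slightly more thorough than the paper's, since you explicitly address the little-$o$ condition and surjectivity via the inverse automorphism.
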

\begin{proof}
We first assume that  $T$ is a surjective linear isometry. It follows from Lemma \ref{sigma} and  respective preamble that
\begin{equation} \label{temp_eq} (1-|z|^2) e^{i \theta} (Tf)''(z)= (1-|\sigma (z)|^2) \Gamma_0 (\theta, z) (f''(\sigma (z ))),  \end{equation}
for every $f \in \mathcal{Z}_0^{(0,1)}$ and $z \in \triangle$.

In particular, for $f_0(z)=z^2/2$, we have
\[ \left| \frac{ (Tf_0)''(z)}{\sigma '(z)} \right| =1.\] Since the mapping $z \rightarrow \frac{ (Tf_0)''(z)}{\sigma '(z)}$ is analytic on the open disc, the Maximum Modulus Principle for analytic functions implies that it must be constant, then there exists $\eta \in [0, \, 2\pi)$ such that
\[ \frac{ (Tf_0)''(z)}{\sigma '(z)}=e^{i \eta}.\]
The equation displayed in  (\ref{temp_eq}) applied to $f_0$ yields
$$e^{i \theta} e^{i \eta}= \frac{|\sigma'(z)|}{\sigma'(z)} \Gamma_0 (\theta, z) .$$
 Substituting this relation in  (\ref{temp_eq}), we have
\[ (Tf)'' (z) = e^{i \eta} \sigma'(z) f''(\sigma (z)).\]
Integrating this last equation twice and since $(Tf)'(0)= Tf (0)=0$, we obtain
\[ (Tf)(z) = e^{i \eta} \int_0^z \left[ f'(\sigma (\xi)) - f'(\sigma (0))\right] d \xi.\]
It is easy to check that $T$ of the form displayed in the statement of the theorem is an isometry. To this end and since $(1-|z|^2) |\sigma'(z)|= |\sigma(z)|,$ we have
\[\begin{array}{rl} sup_{|z|<1} (1-|z|^2) |(Tf)''(z)| & = sup_{|z|<1} (1-|z|^2) |f'' (\sigma (z)) \sigma'(z)| \\
& \\ & = \sup_{|z|<1} (1-|z|^2) |f''  (z)|.\end{array} \]
 This completes the proof.

\end{proof}

\section{ Strongly continuous one parameter groups of surjective isometries on $\mathcal{Z}_0^{(0,1)}$}
Let $\{T_t\}_{t \in \mathbb{R}}$ be a one parameter group of surjective isometries on $\mathcal{Z}_0^{(0,1)}$. For each $t$, $T_t$ has the representation
\[ T_t(f) (z) = e^{i\alpha_t} \,\int_0^z \, [f'\circ\sigma_t)(\xi )-(f'\circ\sigma_t)(0)]d\xi .\]
In this section we show that the group properties of $\{T_t\}_{t \in \mathbb{R}}$ transfer to the families  $\{\alpha_t\}_{t \in \mathbb{R}}$ and also $\{\sigma_t\}_{t \in \mathbb{R}}$ defining $\{T_t\}_{t \in \mathbb{R}}$.

We recall that $\{T_t\}_{t \in \mathbb{R}}$ being a strongly continuous one-parameter group means that  $T_0 = Id$, $T_{s+t} = T_s T_t$ for every $s, t \in \mathbb{R} $ and the strong continuity means that  \[lim_{t \rightarrow 0} \sup_{|z|<1} (1-|z|^2) |(T_tf)''(z)-f''(z)|=0,\]
for every $f \in \mathcal{Z}_0^{(0,1)}.$
\begin{proposition}\label{group_properties} Let $\{T_t\}_{t \in \mathbb{R}}$ be a family of surjective linear isometries on $\mathcal{Z}_0^{(0,1)}$. Then
 $\{T_t\}_{t\in \mathbb{R}}$ is a strongly continuous one parameter group  if and only if there exist a continuous one parameter group of disc automorphisms $\{\sigma_t\}_{t\in \mathbb{R}}$ and a complex number $\alpha$  such that
 \[ T_t (f)(z)= e^{i \alpha t} \int_0^z [ f'(\sigma_t(\xi))-f(\sigma_t(0))] d \xi, \,\,\, \forall \,\, f \in \mathcal{Z}_0^{(0,1)}.\]
\end{proposition}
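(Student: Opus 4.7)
The plan is to leverage Theorem \ref{mt} to translate the group structure of $\{T_t\}$ into a corresponding structure on the parameters $(\alpha_t,\sigma_t)$, and then to recover the linearity $\alpha_t=\alpha t$ from continuity arguments.

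For the sufficiency direction, given a continuous one-parameter group $\{\sigma_t\}$ of disc automorphisms and a real number $\alpha$, I would define $T_t$ by the stated formula and verify the group axioms. The equality $T_0=\mathrm{Id}$ is immediate from $\sigma_0=\mathrm{id}$. Differentiating yields $(T_tf)'(z)=e^{i\alpha t}[f'(\sigma_t(z))-f'(\sigma_t(0))]$ and $(T_tf)''(z)=e^{i\alpha t}\sigma_t'(z)f''(\sigma_t(z))$; inserting these into the formula for $T_s(T_tf)$ and using $\sigma_{s+t}=\sigma_s\circ\sigma_t$ together with the chain rule gives $T_s(T_tf)=T_{s+t}f$. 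Strong continuity at $0$ then reduces to showing that $(1-|z|^2)\bigl|e^{i\alpha t}\sigma_t'(z)f''(\sigma_t(z))-f''(z)\bigr|\to 0$ uniformly in $z$, which follows from the continuity of $t\mapsto\sigma_t$ and a standard density argument.

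For the necessity direction, Theorem \ref{mt} supplies, for each $t$, parameters $\alpha_t\in\mathbb{R}$ and a disc automorphism $\sigma_t$ realizing $T_t$. Applying the composition law $T_{s+t}=T_s\circ T_t$ to the test functions $f_0(z)=z^2/2$ and $f_1(z)=z^3/6$ and taking two derivatives leads to the pair of identities
\begin{align*}
e^{i\alpha_{s+t}}\sigma_{s+t}'(z) &= e^{i(\alpha_s+\alpha_t)}\sigma_t'(\sigma_s(z))\sigma_s'(z),\\
e^{i\alpha_{s+t}}\sigma_{s+t}(z)\sigma_{s+t}'(z) &= e^{i(\alpha_s+\alpha_t)}\sigma_t(\sigma_s(z))\sigma_t'(\sigma_s(z))\sigma_s'(z).
\end{align*}
Dividing the second by the first yields $\sigma_{s+t}=\sigma_t\circ\sigma_s$, and reinserting this into the first produces the multiplicative cocycle $e^{i\alpha_{s+t}}=e^{i(\alpha_s+\alpha_t)}$. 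The initial condition $T_0=\mathrm{Id}$ pins down $\sigma_0=\mathrm{id}$ and $e^{i\alpha_0}=1$.

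The delicate step will be passing from strong continuity of $\{T_t\}$ to continuity of the parameters, and in particular to the existence of a continuous real branch of $t\mapsto\alpha_t$. Applying the strong-continuity estimate to $f_0$ gives $(1-|z|^2)\bigl|e^{i\alpha_t}\sigma_t'(z)-1\bigr|\to 0$ as $t\to 0$, uniformly in $z$, which forces $e^{i\alpha_t}\sigma_t'(z)\to 1$ on every compact subset of $\triangle$; testing against $f_1$ yields $\sigma_t(z)\to z$ on compact subsets. Using the explicit form $\sigma_t(z)=e^{i\phi_t}(z-b_t)/(1-\bar b_t z)$ of a disc automorphism together with $\sigma_t(0)\to 0$ and $e^{i\alpha_t}\sigma_t'(0)\to 1$, one extracts a continuous choice of $\alpha_t$ near $t=0$. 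The additive cocycle relation $\alpha_{s+t}\equiv\alpha_s+\alpha_t\pmod{2\pi}$ combined with this continuity yields $\alpha_t=\alpha t$ for some $\alpha\in\mathbb{R}$ via the standard continuous solution of Cauchy's functional equation, and substituting this back into Theorem \ref{mt} produces the announced representation.
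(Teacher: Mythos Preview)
Your proposal is correct and follows essentially the same route as the paper: both directions hinge on Theorem~\ref{mt}, the test functions $f_0(z)=z^2/2$ and $f_1(z)=z^3/6$ to extract $\sigma_{s+t}=\sigma_t\circ\sigma_s$ and $e^{i\alpha_{s+t}}=e^{i(\alpha_s+\alpha_t)}$, and the explicit M\"obius form of $\sigma_t$ to upgrade pointwise convergence to uniform continuity of $t\mapsto\sigma_t$. Your write-up is in fact a bit tighter than the paper's in two places: you explicitly treat the sufficiency direction (the paper's proof only argues necessity), and you are more careful in passing from the cocycle identity $e^{i\alpha_{s+t}}=e^{i(\alpha_s+\alpha_t)}$ to $\alpha_t=\alpha t$ by first producing a continuous lift and then invoking Cauchy's functional equation, whereas the paper simply asserts this step.
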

\begin{proof}
Since  $T_0 = Id$, we have that  $e^{i \alpha_0} \int_0^z [ f'(\sigma_0(\xi))-f(\sigma_0(0))] d \xi=f(z),$ for every $f$.

This implies that
$e^{i \alpha_0} \left[ f'(\sigma_0(z))-f'(\sigma_0(0)) \right] =f'(z)$ and $ e^{i \alpha_0} f''(\sigma_0 (z)) =f''(z).$ Applying this equation to $f_0(z)=z^2/2$ and to $f_1(z)=z^3/6$ we get $e^{i \alpha_0}=1$ and $\sigma_0=id.$

Since $T_s T_t =T_{s+t}$ we have that  $$ \begin{array}{rl} T_s [T_t (f) ](z) & = e^{ i \alpha_t } \int_0^z \left[ (T_s (f))'(\sigma_t (\xi)) - (T_s f)'(\sigma_t (0))\right] d\xi \\
& \\
& = e^{ i \alpha_t } \int_0^z \left[ e^{i \alpha_s} [ f'(\sigma_s (\sigma_t(\xi)) - f'(\sigma_s (\sigma_t(0)) ] \right] d\xi \\
& \\
& = e^{ i (\alpha_t + \alpha_s) } \int_0^z \left[ f'(\sigma_s (\sigma_t(\xi)) - f'(\sigma_s (\sigma_t(0)) \right] d\xi\\
& \\
& = e^{ i \alpha_{t+s}}  \int_0^z \left[ f'(\sigma_{s +t} (\xi)) -f'(\sigma_{s+t} (0)) \right] d\xi. \end{array} $$

Differentiating this last equation we have
\[e^{ i (\alpha_t + \alpha_s) }  \left[ f'(\sigma_s (\sigma_t(z)) - f'(\sigma_s (\sigma_t(0)) \right] =e^{ i \alpha_{t+s}}  \left[ f'(\sigma_{s +t} (z)) -f'(\sigma_{s+t} (0)) \right]  \]
and differentiating again
\[e^{ i (\alpha_t + \alpha_s) }   f''(\sigma_s (\sigma_t(z))  \sigma_s'(\sigma_t(z)) \tau_t'(z)= e^{ i \alpha_{t+s}}   f''(\sigma_{s +t} (z))  \sigma'_{s+t}(z) ,   \]
for every $f \in \mathcal{Z}_0^{(0,1)}$ and $z \in \triangle . $
In particular for $f(z) = z^2/2$ we have $e^{ i (\alpha_t + \alpha_s) }\sigma_s'(\sigma_t(z)) \sigma_t'(z)= e^{ i \alpha_{t+s}}\sigma'_{s+t}(z).$ Thus $ f''(\sigma_s (\sigma_t(z)))=f''(\sigma_{s +t} (z)) $ holds for every $f \in \mathcal{Z}_0^{(0,1)}$ and $z \in \triangle . $

Also, for $f(z) = z^3/6$, we have $\sigma_s (\sigma_t(z))=\sigma_{s +t} (z)$ and $e^{ i (\alpha_t + \alpha_s) }=e^{ i \alpha_{t+s}}.$
Since $\alpha_t$ is one parameter group of scalars then it must be of the form $\alpha t$.
It is left to prove that that $t \rightarrow \sigma_t$ is continuous at $t=0$. Since $T_t$ is continuous at $t=0$, for every $f$,
\[ \lim_{t \rightarrow 0} \,\sup_{|z|<1} \, (1-|z|^2) | e^{i \alpha t} f'' (\sigma_t(z)) \sigma'(t) - f''(z) |=0 .\]
In particular for $f(z) = z^2/2$ and for $f(z) = z^3/6$ we have
\[ \lim_{t \rightarrow 0} \,\sup_{|z|<1} \, (1-|z|^2) | e^{i \alpha t}  \sigma'(t) - 1 |=0 ,\]
and
\[ \lim_{t \rightarrow 0} \,\sup_{|z|<1} \, (1-|z|^2) | e^{i \alpha t} \sigma_t(z) \, \sigma_t'(z) - z |=0 ,\] respectively.
For every $z \in \triangle$,
\begin{equation} \label{good_ine} \lim_{t \rightarrow 0}| e^{i \alpha t}  \sigma_t'(z) - 1 |=0 \,\,\mbox{and} \,\,\lim_{t \rightarrow 0}| e^{i \alpha t} \sigma_t(z) \, \sigma_t'(z) - z |=0.\end{equation}

Since
\[ |\sigma_t(z)-z| \leq |\sigma_t(z)| | e^{i\alpha t}\sigma_t'(z)-1| + |\sigma_t(z) \sigma_t'(z)-z|\]
the limits displayed in (\ref{good_ine}) imply that, for every $z \in \triangle ,$
\begin{equation} \label{pointwise_ct} \lim_{t \rightarrow 0}|   \sigma_t (z) - z |=0.\end{equation}
For each $t \in \mathbb{R}$, $\sigma_t(z) =\lambda_t \frac{z-a_t}{1-\overline{a_t} \, z}$ with $\lambda_t$ a modulus 1 complex number and $a_t \in \triangle$.
For $z=0$, (\ref{pointwise_ct}) implies that $\lim_{t \rightarrow 0} \lambda_t a_t =0$, then  $\lim_{t \rightarrow 0}  a_t =0.$ Suppose $\lambda_{t_n}$ is a convergent sequence of modulus 1 complex numbers, we assume that it  converges to $e^{i \theta}$, from (\ref{pointwise_ct}) we conclude that $e^{i \theta}=1$ and every   sequence $\lambda_{t_n}$  it must converge to $ 1.$ Therefore, there exists $\delta >0$ such that for  $|t|< \delta$ we have $|a_t|< \min \{1/2, \epsilon /3 \}$ and $|\lambda_t -1|< \epsilon /3$. This implies  that
\[ \begin{array}{rl} |\sigma_t(z) -z| & = \frac{(\lambda_t-1) z - \lambda_t a_t + \overline{a_t} z^2|}{|1-\overline{a_t} z|} \\
& \\
& \leq \frac{\left[ |\lambda_t-1|+ 2 |a_t|  \right] }{ 1-|a_t|}< 2 \epsilon , \,\,\mbox{ for every } \,\, z \in \triangle.\end{array} \]
This shows that $\sigma_t$ is uniformly continuous at $t=0$ and  completes the proof.

\end{proof}
\section{Generators of strongly continuous one-parameter groups of isometries on $\mathcal{Z}^{(0,1)}_0$}
We derive the form of the generator of $\{T_t\}_t$, a strongly continuous one parameter group of surjective linear isometries on $\mathcal{Z}^{(0,1)}_0$. We recall that
\[T_t(f)(z)= e^{i\alpha t} \,\int_0^z \, [f'\circ\sigma_t)(z)-(f'\circ\sigma_t)(0)]dz,\]
and its generator is defined as follows:
\[ \mathcal{G}(f)(z)=\left( -i \frac{d}{dt}T_t\right)|_{t=0} f(z ).\]

Therefore
\begin{align*}
\mathcal{G}f(z) & = -i \left[ i \alpha \int_0^z f'(\xi) \, d\xi +   \partial_t \sigma_t (z)|_{t=0} f'(z) \right]\\
&  = \alpha f(z) -i  \partial_t \sigma_t (z)|_{t=0} f'(z).
\end{align*}

Results in \cite{bp} imply that  $\{\sigma_t \}_t$ is either the trivial group or a group of automorphisms   of one of the following types:
\begin{enumerate}
\item[(i)] Elliptic.\[  \sigma_t (z) = \frac{ ( e^{ict}-|\tau|^2)z - \tau (e^{ict}-1)}{ 1-|\tau|^2 e^{ict} - \bar{\tau} (1- e^{ict})z}, \] with $c \in \mathbb{R}\setminus \{0\}$, $ \tau \in \mathbb{C}$ such that $|\tau|<1.$
    \item[(ii)] Hyperbolic.
    \[ \sigma_t (z) = \frac{(\beta e^{\varphi t }-\alpha)z + \alpha \beta (1- e^{\varphi t})}{ ( e^{\varphi t}-1)z + (\beta - \alpha e^{\varphi t})},\]
    with $\varphi$ a positive real number, $|\alpha|=| \beta|=1$ and $\alpha \neq \beta.$
 \item[(iii)] Parabolic.
    \[ \sigma_t(z) = \frac{(1-ict) z + ict \gamma}{-ic \bar{\gamma} t z +1+ i c t},\] with $c \in R \setminus\{0\}$ and $|\gamma|=1.$
\end{enumerate}
Disc automorphisms can be extended to the conformal maps on the plane and as such they fall in one of the three different types listed above according to the fixed points. More precisely,  an  elliptic automorphism has a single fixed in the disc and another one in the interior of  its complement; a hyperbolic automorphism has two distinct fixed points on the boundary of the disc and  a parabolic has a single fixed point on the boundary of the disc. It is shown in \cite{bp} that every automorphism in a 1-parameter group of disc automorphisms share the same fixed points.

Therefore, we summarize these considerations in the next Proposition.
\begin{proposition} \label{hermitians}
An hermitian operator $\mathcal{G}$ on $\mathcal{Z}^{(0,1)}_0$ is of one of the following forms:
\begin{enumerate}
\item  $\mathcal{G}f(z)= \alpha f(z),$ $\alpha \in \mathbb{R}.$
\item \[\mathcal{G}f(z)= \alpha f(z)- c \, \frac{(\overline{\tau} z -1)(z-\tau)}{1 - |\tau|^2} f'(z), \] with $c \in \mathbb{R}\setminus \{0\}$, $ \tau \in \mathbb{C}$ such that $|\tau|<1.$
\item \[ \mathcal{G}f(z)= \alpha f(z)+\frac{i \varphi}{\beta_0- \beta_1} (z -\beta_0) (z-\beta_1 ) f'(z) ,\]
    with $\varphi$ a positive real number, $|\beta_0|=| \beta_1|=1$ and $\beta_0 \neq \beta_1.$
\item  \[ \mathcal{G}f(z)= \alpha f(z)+ c \overline{\gamma} ( z- \gamma)^2 \, f'(z),\] with $c \in R \setminus\{0\}$ and  $|\gamma|=1.$
\end{enumerate}
\end{proposition}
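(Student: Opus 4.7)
The derivation immediately preceding the proposition has already established that every such generator has the form
\[ \mathcal{G}f(z) = \alpha f(z) - i\,\partial_t\sigma_t(z)|_{t=0}\,f'(z), \]
with $\alpha \in \mathbb{R}$ coming from $\alpha_t = \alpha t$ as in Proposition \ref{group_properties}, and $\{\sigma_t\}$ the corresponding one-parameter group of disc automorphisms. The classification of such groups cited from \cite{bp} exhausts the possibilities: either $\{\sigma_t\}$ is trivial, or it is elliptic, hyperbolic, or parabolic, with explicit formulas (i)--(iii). The proof therefore reduces to computing $\partial_t\sigma_t(z)|_{t=0}$ in each of the four cases and reading off the coefficient of $f'(z)$.

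If $\sigma_t \equiv \mathrm{id}$, the velocity vanishes and case (1) follows at once. For each of the three nontrivial families I would write $\sigma_t(z) = N(t)/D(t)$ and differentiate once at $t=0$ by the quotient rule. Evaluating at $t=0$ gives $N(0) = (1-|\tau|^2)z$, $D(0) = 1-|\tau|^2$ in the elliptic case; $N(0) = (\beta-\alpha)z$, $D(0) = \beta-\alpha$ in the hyperbolic case; and $N(0) = z$, $D(0) = 1$ in the parabolic case, so in all three $\sigma_0 = \mathrm{id}$ as expected. The quotient rule then yields a closed, rational-in-$z$ expression for $\partial_t\sigma_t(z)|_{t=0}$, and multiplying by $-i$ produces the prospective coefficient of $f'(z)$ in cases (2)--(4).

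The only substantive step is the algebraic identification of each computed coefficient with the form displayed in the proposition. In the elliptic case one checks the expansion
\[ (1+|\tau|^2)z - \tau - \bar{\tau}\, z^2 = -(\bar{\tau} z - 1)(z-\tau), \]
which converts the quotient-rule output into $-c(\bar{\tau} z - 1)(z-\tau)/(1-|\tau|^2)$. In the hyperbolic case one factors $\beta(\beta-\alpha) - z(\beta-\alpha) = (\beta-\alpha)(\beta-z)$ and sets $\beta_0 := \beta$, $\beta_1 := \alpha$, which produces the sign and normalization $i\varphi/(\beta_0-\beta_1)$. In the parabolic case one uses $|\gamma|=1$ (so $\gamma\bar{\gamma}=1$) to recognize
\[ \gamma - 2z + \bar{\gamma}\, z^2 = \bar{\gamma}(z-\gamma)^2, \]
matching case (4) with the factor $c\bar{\gamma}$.

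I do not expect any real obstacle here: the substantive content has been absorbed into the representation theorem for isometries, Proposition \ref{group_properties} on the group structure, and the classification from \cite{bp}. Once those are in hand, Proposition \ref{hermitians} is a four-case mechanical computation, each case reducing to a short polynomial identity.
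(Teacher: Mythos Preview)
Your proposal is correct and follows exactly the route the paper takes: the paper presents the proposition as a summary of the preceding derivation of $\mathcal{G}f(z)=\alpha f(z)-i\,\partial_t\sigma_t(z)|_{t=0}\,f'(z)$ together with the Berkson--Porta classification, without writing out the case-by-case differentiations. Your explicit quotient-rule computations and polynomial factorizations simply fill in details the paper leaves to the reader, and they are all correct.
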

\begin{remark} \label{hermitian_trivial}
Proposition \ref{hermitians} implies that bounded hermitian operators on $\mathcal{Z}^{(0,1)}_0$ are trivial, i.e. $\mathcal{G}f(z)= \alpha f(z)$ with $\alpha \in \mathbb{R}.$ We observe that $f(z)=z^2$  is not in the domain of $\mathcal{G}$ for the elliptic, hyperbolic and parabolic cases.  It also follows from standard computations that the point spectrum of  $\mathcal{G}$ is is a singleton for $\mathcal{G}f(z)= \alpha f(z)$ and empty for the remaining cases.
\end{remark}

We now extend the characterization of the surjective isometries to the Little Zygmund space. To this end, we employ results of Fleming and Jamison in \cite{flja}, namely Theorem 3.7(a) and Theorem 3.3. As noted earlier,  $\mathcal{Z}_0 =\left(\mathbb{C}\oplus_1 \mathbb{C} \right)\oplus_1 \mathcal{Z}_0^{(0,1)}$,  Remark \ref{hermitian_trivial} implies that $\mathcal{Z}^{(0,1)}_0$ supports only trivial hermitian projections. It is  clear that $\mathbb{C}$ supports only trivial hermitian projections as well. From Theorem 3.7(a) in \cite{flja} we have that  $T: \mathcal{Z}_0 \rightarrow \mathcal{Z}_0$ is a surjective linear isometry if and only if
\[ T(f)(z) = e^{1 \theta}f(0)+e^{i\eta} f'(0)z + e^{i\alpha} \int_0^z [ f'(\sigma (\xi))-f'(\sigma(0))] d \xi,\]  with $\theta$, $\eta$, $\alpha$ real numbers and $\sigma$ a disc automorphism.

 Similar, from Theorem 3.3 in \cite{flja}, $S$ is a bounded hermitian operator  on $\mathcal{Z}_0$ if and only if there exist real numbers $\alpha_1$, $\alpha_2$ and $\alpha_3 $ such that
 \[ (Sf)(z)= \alpha_1 f(0) + \alpha_2 f'(0) z  + \alpha_3 f(z),\]
 for all $f \in \mathcal{Z}_0$ and $z \in \triangle.$

 \textbf{Acknowledgement.} I am thankful to Professor J. Jamison for many enlightening discussions.

\end{document}